\documentclass[12pt]{article}
\usepackage{amssymb}
\usepackage{amscd}
\usepackage{amsmath}
\usepackage{amsthm}
\usepackage{amsfonts}
\usepackage{mathrsfs}
\newtheorem{theorem}{Theorem}

\newtheorem{lemma}[theorem]{Lemma}

\theoremstyle{definition}

\theoremstyle{remark}

\begin{document}

\title{\Large\bf On duplicate representations as $\boldsymbol{2^x+3^y}$ for nonnegative integers~$\boldsymbol{x}$ and~$\boldsymbol{y}$}

\author{\it Douglas Edward Iannucci}\date{}

\maketitle

\begin{abstract}
We prove a conjecture posted in the Online Encyclopedia of Integer Sequences, namely that there are exactly five positive integers that can be written in more than one way as the sum of a nonnegative power of~2 and a nonnegative power of~3. The case for both powers being positive follows from a theorem of Bennett. We use elementary methods to prove the case where zero exponents are allowed.\end{abstract}
\section{Introduction}
In the Online Encyclopedia of Integer Sequences ({\it OEIS\/}, Sloane~\cite{oeis}), sequence {\tt A004050} comprises the integers of the form $2^x+3^y$ for nonegative integers~$x$ and~$y$. On this sequence's entry in the {\it OEIS\/}, it was remarked as a conjecture in September 2012 that only five of these integers can be so expressed in two different ways. 

In fact, sequence {\tt A085634} lists those very integers representable both as $2^x+3^y$ and $2^a+3^b$, with $x$, $y$, $a$, and~$b$ nonnegative integers and $x>a$. The  five elements listed are
\begin{alignat*}{5}
5&=2^2+3^0,\quad 11&=2^3+3^1,\quad 17&=2^4+3^0,\quad 35&=2^5+3^1,\quad 259&=2^8+3^1,\\
 5&=2^1+3^1,\quad 11&=2^1+3^2,\quad 17&=2^3+3^2,\quad 35&=2^3+3^3,\quad  259&=2^4+3^5.
 \end{alignat*}
On the entry in the {\it OEIS\/} for sequence {\tt A085634}, it was remarked in February 2005 that if~$n$ is in the sequence and $n>259$, then $n>10^{4000}$. In this note, we render this lower bound vacuously true by proving the conjecture: indeed, the five numbers listed above are the only elements of {\tt A085634}.  
 
We thus assume that
\begin{equation}\label{scaduto}
2^x+3^y=2^a+3^b,
\end{equation}
where~$x$, $y$, $a$, and~$b$ are nonnegative integers, such that (without loss of generality) $x>a$ (whence $y<b$). 

Equivalently,
\begin{equation}\label{bennett}
2^x-3^b=2^a-3^y.
\end{equation}
This brings us to sequence {\tt A207079} in the {\it OEIS\/}, which is described in its entry as ``the only nonunique differences between powers of 3 and 2.'' It is given as a finite sequence of five elements, namely 1, 5, 7, 13, and~23. It is commented that the finiteness of this sequence is due to Bennett~\cite{bennett}, who, in fact, proved a more general result, from which the finiteness of {\tt A207079} follows directly. In his article, he gives a clear, precise history of the general problem of determining the number of solutions to the exponential Diophantine equation $|a^x-b^y|=c$, and we learn that the finiteness of the specific sequence {\tt A207079} was first proved in 1982. We state here, as a lemma, the special case of Bennett's result that applies most directly to~\eqref{bennett}.

\begin{lemma}\label{thmbennett}
{\bf(Bennett)} There are precisely three integers of the form $2^x-3^b$, with~$x$ and~$b$ natural numbers, that are also expressible as $2^a-3^y$, with~$a$ and~$y$ natural numbers such that $x> a$. They are
$$-1=2^3-3^2=2-3,\qquad 5=2^5-3^3=2^3-3,\qquad 13=2^8-3^5=2^4-3.$$
These are, respectively, the only two such representations for these three integers. All other integers have either a unique such representation, or none at all. 
\end{lemma}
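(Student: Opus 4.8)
The plan is to obtain this statement as a direct specialization of Bennett's theorem on the Pillai equation $A^s-B^t=C$, rather than to attempt an independent elementary argument: the quantitative heart of the matter, namely bounding the number of representations, is exactly what Bennett's work supplies, and no elementary substitute is available at this level. Concretely, I would set $N:=2^x-3^b=2^a-3^y$ and first dispose of the degenerate case $N=0$, which would force $2^x=3^b$; since $x$ and $b$ are positive, the left side is even and the right side is odd, so $N\neq 0$. Because both representations of $N$ share its sign, I then split into the cases $N>0$ and $N<0$.

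In the case $N>0$ I read \eqref{bennett} as asserting that $(x,b)$ and $(a,y)$ are two solutions in positive integers of the single equation $2^s-3^t=N$, and these are distinct because $x>a$. Bennett's theorem guarantees that such an equation has at most two positive-integer solutions, and it furnishes the complete finite list of triples $(A,B,C)$ for which two solutions actually occur. Restricting that list to the literal base pair $(A,B)=(2,3)$ leaves precisely $C=5$, with solutions $(s,t)=(5,3)$ and $(3,1)$, and $C=13$, with solutions $(s,t)=(8,5)$ and $(4,1)$; this yields $N\in\{5,13\}$ together with the displayed representations. In the case $N<0$ I instead write $-N=3^t-2^s$, so that $(b,x)$ and $(y,a)$ are two distinct positive-integer solutions of $3^t-2^s=-N$; the same theorem, now restricted to the base pair $(3,2)$, leaves only $-N=1$, with solutions $(t,s)=(2,3)$ and $(1,1)$, giving $N=-1$ and the representation $2^3-3^2=2-3$.

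Assembling the three cases produces exactly the integers $-1$, $5$, and $13$, each with the two representations listed, while Bennett's ``at most two'' bound simultaneously certifies that no integer admits a third representation and that every other integer of the form $2^x-3^b$ is either uniquely so expressible or not at all. A final bookkeeping check confirms that in each exceptional case the power of $2$ carrying the larger exponent is indeed the one paired with the larger power of $3$, so the normalization $x>a$ is consistent with the displayed data. The main obstacle is not in this deduction, which is routine, but in the appeal to Bennett itself: one must invoke the precise form of his result that both caps the number of solutions at two and enumerates the finitely many exceptional triples, since the sharpness of that count for the bases $2$ and $3$ is exactly the nontrivial input and cannot be recovered by elementary means.
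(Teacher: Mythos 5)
Your proposal matches the paper's treatment exactly: the paper offers no independent proof of this lemma, presenting it as a direct specialization of Bennett's work on $|a^x-b^y|=c$ (together with the complete 1982 determination for the bases $2$ and $3$ that Bennett records), and your sign-splitting reduction of $2^x-3^b=2^a-3^y$ to two distinct positive-integer solutions of a single Pillai equation is precisely that intended argument. The one point to state carefully is that Bennett's unconditional theorem supplies only the ``at most two solutions'' bound, while the explicit list of exceptional values $c$ is in general his Conjecture~1.2; for the specific base pairs $(2,3)$ and $(3,2)$ the enumeration is a proven theorem (the 1982 result the paper alludes to), so your appeal is legitimate provided you cite that case rather than the general conjecture.
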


We apply Bennett's result to the cases of~\eqref{scaduto} and~\eqref{bennett} where~$x$, $y$, $a$, and~$b$ are all positive integers. This leaves us with the special case when $y=0$; clearly \eqref{scaduto} and~\eqref{bennett} are impossible if $a=0$. We prove the special case $y=0$ by elementary methods, except for the one instance where we apply Lemma~\ref{thmbennett} to deduce that~1 has only the single representation $1=2^2-3$ (although it is not difficult to prove this fact independently).

\section{The case when $\boldsymbol{y>0}$}

\begin{theorem}\label{y_one}
There are precisely three solutions to~\eqref{scaduto} when $y>0$. They are
$$11=2^3+3=2+3^2,\qquad 35=2^5+3=2^3+3^3,\qquad 259=2^8+3=2^4+3^5.$$
\end{theorem}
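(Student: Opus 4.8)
The plan is to leverage
Lemma~\ref{thmbennett} directly. Equation~\eqref{scaduto} with $y>0$ is
equivalent, via~\eqref{bennett}, to the equation $2^x-3^b=2^a-3^y$ with
$x>a$ and all four exponents positive. I would first observe that the
hypothesis $y>0$ is precisely what forces all exponents to be natural
numbers: we already have $x>a\ge 1$ (since $a=0$ is impossible), and the
standing assumption $y<b$ gives $b\ge 1$, so together with $y\ge 1$ every
exponent lies in the range to which Bennett's lemma applies.

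With that reduction in place, the core of the argument is simply to read
off the conclusion of Lemma~\ref{thmbennett}. The lemma asserts that the
only integers of the form $2^x-3^b$ admitting a second representation
$2^a-3^y$ with $x>a$ are $-1$, $5$, and $13$, each with exactly the two
representations listed there. I would therefore match each of Bennett's
three values against~\eqref{bennett} to recover the corresponding solution
of~\eqref{scaduto}. The value $5=2^5-3^3=2^3-3$ rearranges to
$2^5+3=2^3+3^3$, giving $35$; the value $13=2^8-3^5=2^4-3$ rearranges to
$2^8+3=2^4+3^5$, giving $259$.

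The one subtlety to address is the first of Bennett's values, $-1=2^3-3^2
=2-3$. Rearranging $2^3-3^2=2^1-3^1$ gives $2^3+3=2^1+3^2$, that is,
$11=8+3=2+9$; so this value does correspond to a genuine solution
of~\eqref{scaduto}, not a spurious one. I would make explicit that the
bijection between solutions of~\eqref{bennett} and solutions
of~\eqref{scaduto} is exact here: moving the negative power of~3 from one
side to the other is reversible and preserves the constraints $x>a$ and
$y<b$, so Bennett's three values yield precisely the three solutions $11$,
$35$, and $259$ and no others.

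I do not anticipate a genuine obstacle, since this theorem is essentially a
translation of Bennett's deep result into the additive
form~\eqref{scaduto}; the only care required is bookkeeping, namely
verifying that the positivity hypotheses transfer correctly and that the
rearrangement introduces no solutions outside the allowed exponent ranges.
The main ``work'' is confirming that none of Bennett's representations
silently uses a zero exponent (so that the case $y>0$ is cleanly separated
from the $y=0$ analysis handled elsewhere), which a quick inspection of the
exponents $3,2,1,1,5,3,3,1,8,5,4,1$ confirms.
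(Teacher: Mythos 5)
Your proposal is correct and follows essentially the same route as the paper: rewrite~\eqref{scaduto} as~\eqref{bennett}, note that $y>0$ (together with $a\ge 1$ and $y<b$) puts all exponents in the range where Lemma~\ref{thmbennett} applies, and read off the three values $-1$, $5$, $13$ with their two representations each to recover $11$, $35$, and $259$. The bookkeeping you flag (reversibility of the rearrangement and absence of zero exponents) is exactly the content of the paper's case-by-case verification.
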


\begin{proof} 
Let $c=2^a-3^y$ in~\eqref{bennett}. By Lemma~\ref{thmbennett}, if $c\notin\{-1,5,13\}$, then~$x=a$, which contradicts the hypothesis $x>a$. Otherwise, $c\in\{-1,5,13\}$. 

Suppose $c=-1$. By Lemma~\ref{thmbennett}, we have the two representations, as in~\eqref{bennett},
$$-1=2^3-3^2=2-3.$$
Thus, $x=3$, $b=2$, $a=1$, and $y=1$. This produces
$$2^3+3=2+3^2=11.$$

Suppose $c=5$. Similarly, 
$$5=2^5-3^3=2^3-3,$$
thus producing
$$2^5+3=2^3+3^3=35.$$

Suppose $c=13$. Similarly,
$$13=2^8-3^5=2^4-3$$
produces
$$2^8+3=2^4+3^5=259.$$
\end{proof}

\section{The case when $\boldsymbol{y=0}$}
For a prime~$p$ and a natural number~$n$, we write $p\|n$ if $p\mid n$ but $p^2\nmid n$. We denote the {\it $p$-valuation of~$n$} by $v_p(n)$: i.e., $v_p(n)=k$ if $p^k\|n$.

\begin{lemma}\label{v2}
 If~$n$ is a natural number then
$$v_2(3^n-1)=\begin{cases}
1,&\text{if $2\nmid n$;}\\
2+v_2(n),&\text{if $2\mid n$.}\end{cases}$$
\end{lemma}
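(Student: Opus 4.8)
The plan is to treat the two parities of~$n$ separately, reducing everything to a couple of elementary observations about the $2$-adic valuation of $3^k\pm1$. For the case $2\nmid n$, I would write
$$3^n-1=(3-1)\sum_{k=0}^{n-1}3^k=2\sum_{k=0}^{n-1}3^k;$$
since each summand $3^k$ is odd and there are~$n$ of them (an odd number), the sum is odd, so exactly one factor of~$2$ is present and $v_2(3^n-1)=1$.

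For the even case the key tool is a telescoping factorization. Writing $n=2^sm$ with $m$ odd and $s=v_2(n)\ge1$, and setting $x=3^m$, I would apply
$$x^{2^s}-1=(x-1)(x+1)(x^2+1)(x^4+1)\cdots\bigl(x^{2^{s-1}}+1\bigr),$$
which upon substituting $x=3^m$ becomes
$$3^n-1=(3^m-1)(3^m+1)\prod_{j=1}^{s-1}\bigl(3^{2^jm}+1\bigr).$$
Taking $v_2$ of both sides then reduces the problem to evaluating the valuation of each individual factor.

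Those valuations I would establish by reduction modulo~$8$. Since $3^2=9\equiv1\pmod 8$, one has $3^m\equiv3\pmod 8$ for odd~$m$ and $3^e\equiv1\pmod 8$ for even~$e$. Consequently $v_2(3^m-1)=1$ by the odd case above; from $3^m+1\equiv4\pmod 8$ we get $v_2(3^m+1)=2$; and for each $j\ge1$ the exponent $2^jm$ is even, so $3^{2^jm}+1\equiv2\pmod 8$ and $v_2\bigl(3^{2^jm}+1\bigr)=1$. Summing the $s+1$ contributions gives $v_2(3^n-1)=1+2+(s-1)=s+2=2+v_2(n)$, as claimed.

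The computations here are routine; the only point demanding care is the bookkeeping, namely confirming that the factorization has exactly $s+1$ factors and that all but two of them contribute a single factor of~$2$, so that the total comes out to $s+2$ rather than being off by one. An alternative and perhaps cleaner route avoids the product identity altogether by inducting on $s=v_2(n)$ via the single factorization $3^n-1=(3^{n/2}-1)(3^{n/2}+1)$: its second factor contributes $v_2=2$ when $n/2$ is odd (the base of the induction) and $v_2=1$ when $n/2$ is even (the inductive step, where $v_2(n)=v_2(n/2)+1$). Either way, the entire argument rests only on the two mod-$8$ facts above.
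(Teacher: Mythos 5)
Your proof is correct, including the edge case $s=1$ where the product $\prod_{j=1}^{s-1}$ is empty and the total valuation is $1+2=3=2+v_2(n)$. The paper, however, does not prove this lemma at all: it simply notes that Lemmata~2 and~3 ``follow easily from Theorems~94 and~95'' of Nagell's textbook, which are general lifting-the-exponent--type statements about $v_p(a^n-1)$ when $a\equiv1\pmod p$ (with the usual separate treatment of $p=2$). So your argument is a genuinely different, self-contained route: the odd case via the geometric-sum factorization $3^n-1=2\sum_{k=0}^{n-1}3^k$ with an odd number of odd summands, and the even case via the telescoping factorization $x^{2^s}-1=(x-1)(x+1)(x^2+1)\cdots(x^{2^{s-1}}+1)$ at $x=3^m$ combined with the two mod-$8$ facts $3^{\text{odd}}\equiv3$ and $3^{\text{even}}\equiv1$. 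What your approach buys is that the paper would no longer need the Nagell citation for this lemma; what the paper's approach buys is brevity and the fact that the same citation simultaneously dispatches Lemma~3 (the $v_3(2^n-1)$ statement), whereas your mod-$8$ bookkeeping is specific to the base-$3$, prime-$2$ case and would need to be redone (with a mod-$9$ analysis, and without the subtlety of the factor $x+1$ contributing $2$ rather than $1$) for the companion lemma. Your closing remark that the induction on $s$ via $3^n-1=(3^{n/2}-1)(3^{n/2}+1)$ gives a cleaner variant is also correct and is essentially the same computation organized recursively.
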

\begin{lemma}\label{v3} 
 If~$n$ is a natural number then
$$v_3(2^n-1)=\begin{cases}
0,&\text{if $2\nmid n$;}\\
1+v_3(n),&\text{if $2\mid n$.}\end{cases}$$
\end{lemma}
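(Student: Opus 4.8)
The plan is to split on the parity of~$n$, exactly as the statement dictates. When~$n$ is odd, I would observe that $2\equiv-1\pmod 3$, so $2^n\equiv(-1)^n=-1\pmod 3$ and hence $2^n-1\equiv1\pmod 3$; thus $3\nmid 2^n-1$ and $v_3(2^n-1)=0$, which settles the odd case at once.

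For even~$n$, write $n=2m$, so that $2^n-1=4^m-1$. Since $3\nmid 2$ we have $v_3(n)=v_3(2m)=v_3(m)$, so it suffices to prove that $v_3(4^m-1)=1+v_3(m)$ for every natural number~$m$. I would establish this by induction on $v_3(m)$, writing $m=3^k t$ with $3\nmid t$ and showing $v_3(4^m-1)=1+k$. For the base case $k=0$ (that is, $3\nmid m$), expanding $4^m=(1+3)^m$ by the binomial theorem yields $4^m-1=3m+9K$ for some integer~$K$; since $3\nmid m$ the first term has $3$-valuation exactly~$1$ while $9K$ has valuation at least~$2$, whence $v_3(4^m-1)=1$.

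For the inductive step I would use the factorization $4^{3m}-1=(4^m-1)(4^{2m}+4^m+1)$. Setting $a=4^m\equiv1\pmod 3$ and writing $a=1+3s$, a short expansion gives $a^2+a+1=3(1+3s+3s^2)$, whose second factor is $\equiv1\pmod 3$; hence $v_3(4^{2m}+4^m+1)=1$. Therefore passing from~$m$ to~$3m$ raises the valuation by exactly~$1$, which completes the induction and the even case.

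The crux is this inductive step: verifying that the cofactor $4^{2m}+4^m+1$ contributes \emph{precisely} one factor of~$3$ and never~$9$. This is where the ``$+1$'' in the formula originates, and it is the only place the argument could fail, were that cofactor accidentally divisible by~$9$; the explicit identity $a^2+a+1=3(1+3s+3s^2)$ rules this out. One could alternatively invoke the lifting-the-exponent lemma directly, since $v_3(4-1)=1$, but in keeping with the elementary spirit of the paper I would present the self-contained induction above.
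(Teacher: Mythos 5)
Your proof is correct. Note, though, that the paper itself offers no argument for this lemma at all: it simply remarks that Lemmata~\ref{v2} and~\ref{v3} ``follow easily from Theorems~94 and~95'' of Nagell, which are classical results on the multiplicative order and the power to which a prime divides $a^n-1$ (in modern language, the lifting-the-exponent phenomenon). What you have done is supply a self-contained replacement for that citation: the odd case via $2\equiv-1\pmod 3$, and the even case by reducing to $v_3(4^m-1)=1+v_3(m)$ and inducting on $v_3(m)$ using the factorization $4^{3m}-1=(4^m-1)(4^{2m}+4^m+1)$ together with the explicit identity $a^2+a+1=3(1+3s+3s^2)$ for $a=1+3s$, which correctly pins the cofactor's $3$-valuation at exactly~$1$. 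Every step checks out, including the base case via the binomial expansion of $(1+3)^m$ and the observation $v_3(2m)=v_3(m)$. Your version is longer than the paper's one-line appeal to Nagell but has the virtue of being fully elementary and verifiable on the spot, which is in keeping with the paper's stated aim of using elementary methods; either route is acceptable here.
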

Lemmata~\ref{v2} and~\ref{v3} follow easily from Theorems~94 and~95, Nagell~\cite{nagell}.

\begin{theorem}\label{y_zero}
There are precisely two solutions to~\eqref{scaduto} when $y=0$. They are
$$5=2^2+1=2+3,\qquad 17=2^4+1=2^3+3^2.$$
\end{theorem}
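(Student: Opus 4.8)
The plan is to set $y=0$ in \eqref{scaduto}, so that we must solve $2^x+1=2^a+3^b$ with $x>a\ge 1$ and $b\ge 1$ (the case $a=0$ being immediately impossible). By \eqref{bennett} this is the same as $2^x-3^b=2^a-1$. First I would extract parity information by reducing modulo~$3$: since $3^b\equiv 0$, the equation gives $2^x\equiv 2^a-1\pmod 3$, and because $2^x\in\{1,2\}$ while $2^a-1\in\{0,1\}$ modulo~$3$, both sides must equal~$1$. Hence $x$ is even and $a$ is odd. This single congruence drives the whole argument.

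Next I would dispose of the case $a=1$. Here the equation collapses to $2^x-3^b=1$ with $x\ge 2$ and $b\ge 1$, i.e. the value~$1$ is represented as a difference of a positive power of~$2$ and a positive power of~$3$. Since $1\notin\{-1,5,13\}$, Lemma~\ref{thmbennett} guarantees that such a representation is unique, namely $1=2^2-3$; thus $x=2$, $b=1$, producing the solution $5=2^2+1=2+3$.

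For the remaining case $a\ge 3$ (odd) I would proceed as follows. Because $x$ is even and $x>a\ge 3$, we have $x\ge 4$, so reducing modulo~$8$ annihilates both $2^x$ and $2^a$ and forces $3^b\equiv 1\pmod 8$, whence $b$ is even; write $x=2n$ and $b=2m$ with $n\ge 2$, $m\ge 1$. Rewriting $2^x-2^a=3^b-1$ as $2^a(2^{x-a}-1)=3^b-1$ and taking $2$-adic valuations (the factor $2^{x-a}-1$ being odd) gives $v_2(3^b-1)=a$; by Lemma~\ref{v2} this means $a=2+v_2(b)=3+v_2(m)$, so $2^{a-3}\mid m$ and in particular $m\ge 2^{a-3}$. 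Meanwhile the difference of two squares factors the equation as $(2^n-3^m)(2^n+3^m)=2^a-1$, the two factors being positive and odd.

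The heart of the matter is to show that the smaller factor $Q=2^n-3^m$ equals~$1$, and I expect this to be the main obstacle. I would argue by contradiction: if $Q\ge 3$ then $2^n\ge 3^m+3$, so $2^n+3^m\ge 2\cdot 3^m+3$ and therefore $2^a-1=(2^n-3^m)(2^n+3^m)\ge 3(2\cdot 3^m+3)>2\cdot 3^{m+1}$, giving $3^{m+1}<2^{a-1}$. But the valuation bound $m\ge 2^{a-3}$ yields $3^{m+1}\ge 3^{2^{a-3}+1}\ge 2^{2^{a-3}+1}\ge 2^{a-1}$, the last step being the elementary estimate $2^{a-3}+1\ge a-1$ for $a\ge 3$. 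This contradiction is precisely where the $2$-adic valuation (Lemma~\ref{v2}) must be fed into a size estimate, and it is the crux of the proof. Hence $Q=1$, i.e. $2^n-3^m=1$ with $n,m\ge 1$; Lemma~\ref{thmbennett} again forces $n=2$, $m=1$, so $x=4$ and $b=2$, and substituting back into $2^x+1=2^a+3^b$ gives $2^a=8$, i.e. $a=3$. This is the solution $17=2^4+1=2^3+3^2$. Collecting the two cases yields exactly the two claimed solutions.
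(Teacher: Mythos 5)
Your proof is correct, and in the main case it takes a genuinely different route from the paper's. The case $a=1$ (equivalently, $b$ odd) coincides with the paper's treatment: both reduce to $2^x-3^b=1$ and invoke Lemma~\ref{thmbennett}. For the remaining case, the paper sets $s=x-a$, shows $s$ is odd via Lemma~\ref{v3}, pins down $a=3$ by excluding $4\mid b$ with a mod-$5$ argument, and then converts $2^a(2^s-1)=3^b-1$ into the quadratic $2^s-1=2z^2-z$ in $z=(3^c+1)/4$; completing the square gives $(2^{t+2}-4z+1)(2^{t+2}+4z-1)=7$, which is solved outright with no further appeal to Bennett. You instead show $x$ is even and $a$ odd (mod $3$) and $b$ even (mod $8$), factor $2^x-3^b=(2^n-3^m)(2^n+3^m)=2^a-1$ as a difference of squares, and use the valuation bound $a=3+v_2(m)$ from Lemma~\ref{v2} to get $m\ge 2^{a-3}$, whence the size comparison $3^{m+1}\ge 2^{2^{a-3}+1}\ge 2^{a-1}$ (valid since $2^{a-3}\ge a-2$ for $a\ge 3$) contradicts the bound $3^{m+1}<2^{a-1}$ that would follow from the small factor being at least $3$; oddness then forces $2^n-3^m=1$, and Lemma~\ref{thmbennett} is applied a second time. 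Your version avoids Lemma~\ref{v3} entirely and handles all odd $a\ge 3$ uniformly rather than first forcing $a=3$; the cost is the extra appeal to Bennett's theorem for $2^n-3^m=1$, though as the paper notes that particular fact is easy to prove independently (mod $8$ it forces $n\le 2$). All the individual steps check out.
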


\begin{proof} 
We are given
\begin{equation}\label{case1conj}
2^x+1=2^a+3^b,
\end{equation}
where~$x$, $a$, and~$b$ are natural numbers, where $x>a$. Let $s=x-a$. Thus,
\begin{equation}\label{case1fac}
2^a(2^s-1)=3^b-1.
\end{equation}
It is necessary by Lemma~\ref{v3} that~$s$ is odd, as, by~\eqref{case1fac}, $3\nmid2^s-1$. 

First, suppose~$b$ is odd. Then Lemma~\ref{v2} implies $2\|3^b-1$, hence, by~\eqref{case1fac}, $a=1$. Thus, by~\eqref{case1conj},
$$2^x-3^b=1.$$
Thus, by Lemma~\ref{thmbennett}, $x=2$ and $b=1$. This produces the equation 
$$2^2+1=2+3=5.$$

It remains to let~$b$ be even. Then $a=2+v_2(b)$ by Lemma~\ref{v2}. Suppose $2^2\mid b$. Then $3^4-1\mid3^b-1$, hence $5\mid3^b-1$. Then~\eqref{case1fac} implies $5\mid2^s-1$, hence $4\mid s$, a contradiction as~$s$ is odd. Therefore $2\|b$ and $a=3$. Writing $b=2c$ for an odd natural number~$c$, we have by~\eqref{case1fac}
$$2^s-1=\frac{3^c-1}2\cdot\frac{3^c+1}4.$$
Letting
$$z=\frac{3^c+1}4,$$
then~$z$ is a natural number by Lemma~\ref{v2}, and we obtain the quadratic in~$z$,
$$2^s-1=2z^2-z.$$
Completing the square yields
$$(4z-1)^2=2^{s+3}-7.$$
Writing $s=2t+1$ yields the difference of squares factorization
$$ (2^{t+2}-4z+1)(2^{t+2}+4z-1)=7.$$
Therefore
$$2^{t+2}-4z+1=1,\qquad 2^{t+2}+4z-1=7;$$
thus,
$$2^{t+2}=4z=4.$$
Therefore $t=0$, $z=1$; thus, $c=1$. Hence $s=1$ and $b=2$. Recalling $a=3$, we have~$x=4$. This produces the equation 
$$2^4+1=2^3+3^2=17.$$
\end{proof}

\vskip 24pt\noindent
\it Director, Center for Numerology\\ 
\it  University of the Virgin Islands\\ 
\it 2 John Brewers Bay \\ 
\it St. Thomas VI 00802\\ 
\it USA\\
\tt diannuc@uvi.edu\\ 
\tt diannuc@gmail.com\\

\end{document}